  \renewcommand{\AA}{\mathbb{A}}
  \newcommand{\NN}{\mathbb{N}}
  \newcommand{\QQ}{\mathbb{Q}}
  \newcommand{\ZZ}{\mathbb{Z}}
  \newcommand{\mc}[1]{\mathcal{#1}}
  \newcommand{\ol}[1]{\overline{#1}}
  \theoremstyle{plain}
  \newtheorem{thm}{Theorem}
  \newtheorem{lemma}[thm]{Lemma}
  \newtheorem{conj}[thm]{Conjecture}
  \newtheorem{prop}[thm]{Proposition}
  \theoremstyle{definition}
  \theoremstyle{remark}
  \newtheorem{rem}[thm]{Remark}
  \numberwithin{equation}{section}
  \numberwithin{thm}{section}
\begin{document}

  \title{The ABC conjecture implies the weak diversity conjecture}
  \author[Hasson]{Hilaf Hasson}
  \address{Hilaf Hasson: University of Maryland, College Park, MD 20742, USA}
  \email{hilaf@math.umd.edu}
  \author[Obus]{Andrew Obus}
  \address{Andrew Obus: University of Virginia, Charlottesville, VA 22904, USA}
  \email{obus@virginia.edu}
  \date{\today}
  \keywords{Hilbert irreducibility theorem, rational points}
  \subjclass[2010]{11G30, 14G25, 14H25, 14H30}

  \begin{abstract} 
  We show that the $abc$ Conjecture implies the weak diversity conjecture of Bilu and Luca.
  \end{abstract}

  \maketitle

  \section{Introduction}\label{Sintro}
  This note concerns the Weak and Strong Diversity Conjectures. The Strong Diversity conjecture, due to Andrzej Schinzel, first appeared in 
  \cite{DZ}, in the discussion following Theorem 2 of that paper. (The name ``Strong Diversity'' first appeared in 
  \cite{BiluLuca2}, as Conjecture 1.5.)
  \begin{conj}\label{strongdiversity}(``Strong Diversity'')\, 
  Let $X\rightarrow \mathbb{A}^1_{\QQ}$ be a geometrically irreducible branched cover of curves over $\QQ$, such that not all of 
  its branch points are $\QQ$-rational, or such that the cover is not abelian. Let $k(N)$ be the compositum of the fields of rationality of 
  the points in the fibers over $x=1,...,N$. Then there exists a constant $c$, independent of $N$, such that the degree of $k(N)$ over $\QQ$ 
  is at least $e^{cN}$.
  \end{conj}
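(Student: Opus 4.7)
Assuming the ABC conjecture, the plan is to exhibit $\Omega(N)$ indices $t\in\{1,\ldots,N\}$ at which $[k(t):k(t-1)]\geq 2$; telescoping then gives $[k(N):\QQ]\geq 2^{cN}$, which is of the required form $e^{cN}$.

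First I would fix an integral model $f(x,y)\in\ZZ[x,y]$ of the cover, monic in $y$, and set $D(x)=\disc_y f(x,y)\in\ZZ[x]$. Since the cover is geometrically irreducible and branched, $\deg D\geq 1$. Away from a finite set of $t$, the discriminant of the fiber algebra $\QQ[y]/(f(t,y))$ divides $D(t)$, and any prime $p\mid D(t)$ appearing to \emph{odd} multiplicity actually ramifies in that algebra, because the discriminant of the order $\ZZ[y]/(f(t,y))$ differs from the true field discriminant by a square. Under ABC one has $\rad(D(t))\gg|D(t)|^{1-\eps}$, and writing $D(t)=\mathrm{sqf}(D(t))\cdot m^{2}$ with $\rad(D(t))\leq\mathrm{sqf}(D(t))\cdot m$ one deduces $\mathrm{sqf}(D(t))\geq|D(t)|^{1-2\eps}\gg t^{(1-2\eps)\deg D}$, so the product of the odd-multiplicity prime divisors of $D(t)$ grows essentially like $|D(t)|$.

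Next I would argue that for a positive proportion of $t\leq N$ the odd-multiplicity part of $D(t)$ carries a prime factor $p_t>N$. For such a prime, $D$ has at most $\deg D$ roots modulo $p_t$, so $p_t$ can divide $D(t')$ for at most $\deg D$ values $t'\leq N$. Grouping the $t$'s by their $p_t$ and passing to a subset of size $\Omega(N)$ with pairwise distinct $p_t$, one gets that $p_t$ ramifies in $\QQ(X_t)$ but in none of the $\QQ(X_{t'})$ for $t'<t$, forcing $\QQ(X_t)\not\subset k(t-1)$ and hence $[k(t):k(t-1)]\geq 2$. The hypothesis that the cover either has a non-$\QQ$-rational branch point or is non-abelian is used via Hilbert irreducibility: it guarantees that almost all fibers $f(t,y)$ are irreducible over $\QQ$, so the ramification in the fiber algebra genuinely occurs in a single field $\QQ(X_t)$ and is not absorbed into a lower-degree subfield of $k(N)$.

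The main obstacle is the large-prime extraction above: pointwise ABC controls only the total radical of $D(t)$, not its split between small and large primes. The natural attempt is to subtract the small-prime contribution, bounded via $\sum_{p\leq N}\log p\cdot\#\{t\leq N:p\mid D(t)\}\ll\deg D\cdot N\log N$, from the ABC-driven total $\sum_{t\leq N}\log\rad(D(t))\gg(1-O(\eps))\deg D\cdot N\log N$, and conclude that the residual mass must live on primes exceeding $N$. Unfortunately these two quantities are of the same order, and closing the gap with a positive constant appears to require either a uniform form of ABC with explicit $\eps$-dependence or an averaging argument exploiting cancellation across values of $t$. It is plausibly this obstruction that will force the paper to weaken Conjecture~\ref{strongdiversity}, replacing the exponential lower bound $e^{cN}$ with some super-polynomial but sub-exponential function of $N$.
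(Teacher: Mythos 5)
This statement is a \emph{conjecture}, and the paper does not prove it in full: it only establishes (Theorem \ref{Plargedegree}, assuming $abc$) the case where the branch locus contains a point of degree $\geq 2$ over $\QQ$, by feeding an $abc$-conditional squarefree-values result of Granville into the existing Dvornicich--Zannier argument. Your proposal aims at the whole conjecture, and the gap you candidly flag at the end is genuine --- but in the non-rational-branch-point case it has a known repair, while in the remaining case your approach breaks down entirely rather than merely losing a constant.

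The repair: your bound $\sum_{p\leq N}\log p\cdot\#\{t\leq N: p\mid D(t)\}\ll \deg D\cdot N\log N$ is too crude. For a fixed \emph{irreducible} factor $g$ of $D$ of degree $d\geq 2$, the number of roots of $g$ modulo $p$ averages to $1$ (not $d$) by the prime ideal theorem applied to $\QQ[x]/(g)$, so the small-prime contribution to $\sum_{t\le N}\log|g(t)|\sim d\,N\log N$ is only $N\log N+O(N)$. The deficit $(d-1)N\log N$ must be carried by primes $>N$, each contributing $O(\log N)$, which yields $\gg N$ large primes --- this is exactly \cite[Eq.\ (1), p.\ 427]{DZ}, the unconditional input to the paper's Lemma \ref{Lsquarefree}; $abc$ (via Granville) is then used only to discard the $o(N)$ large primes occurring to multiplicity $>1$, recovering your odd-multiplicity (hence genuinely ramified) primes. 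So the ``uniform form of $abc$'' you fear is needed is not: the missing idea is the Chebotarev-type refinement of the small-prime count, and it requires $\deg g\geq 2$, i.e.\ a branch point not defined over $\QQ$. When all branch points are $\QQ$-rational, $D$ can split into linear factors over $\QQ$, and then there are simply no primes $>N$ dividing $D(t)$ for $t\leq N$ (consider $g(x)=x$); your discriminant-ramification mechanism produces nothing, and the non-abelian hypothesis must enter in an essentially different, group-theoretic way. That case remains open in the paper as well, so you should not expect to close it by sharpening the analytic estimates; but you should be aware that your argument, once repaired, proves no more than the paper's Theorem \ref{Plargedegree}. (A smaller point: your use of Hilbert irreducibility to ensure the fibers are irreducible is fine but is not where the hypothesis of the conjecture is consumed; irreducibility of almost all fibers holds for any geometrically irreducible cover.)
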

  We note that the hypotheses in the above conjecture are necessary, and we refer the reader to \cite{DZ} for further discussion. The Strong 
  Diversity Conjecture is closely related to the ``Weak Diversity Conjecture'' (Conjecture 1.4 in \cite{BiluLuca2}).
  \begin{conj}\label{weakdiversity}(``Weak Diversity'')
  Let $K$ be a number field, and let $X\rightarrow \mathbb{A}^1_{K}$ be a non-trivial geometrically irreducible branched cover of curves 
  over $K$. Then there exists a constant $c$ such that the number of different fields appearing as residue fields of the points in the fibers over $x=1,...,N$ is at least $cN$ for all $N$.
  \end{conj}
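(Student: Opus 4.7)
The plan is to combine the Hilbert Irreducibility Theorem with a squarefree-values consequence of the $abc$ conjecture. Let $d := \deg(X/\mathbb{A}^1_K) \geq 2$, and choose a model in which $X$ is cut out by an irreducible polynomial $f(x,y) \in K[x][y]$ of $y$-degree $d$. Let $\Delta(x) = \disc_y f(x,y) \in K[x]$ be its $y$-discriminant, a polynomial of degree $r \geq 1$.

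By Hilbert's Irreducibility Theorem, the set $A \subseteq \{1,\ldots,N\}$ of integers $n$ for which $f(n,y)$ is irreducible over $K$ satisfies $|A| \geq \alpha N$ for some constant $\alpha > 0$ depending only on $X/K$. For each $n \in A$, the fiber $X_n = \Spec(L_n)$ is a single closed point with residue field $L_n = K[y]/(f(n,y))$ of degree $d$ over $K$. Setting $\alpha_n := y \bmod f(n,y)$, the ring $\mathcal{O}_K[\alpha_n]$ is an order in $\mathcal{O}_{L_n}$ with discriminant $\Delta(n)$, yielding the relation
\[
\Delta(n) = c_n^2 \cdot \disc(L_n/K), \qquad c_n := [\mathcal{O}_{L_n}:\mathcal{O}_K[\alpha_n]].
\]

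The key input from $abc$ is a theorem of Granville: conditional on the $abc$ conjecture, any polynomial in $\mathbb{Z}[x]$ (or more generally in $\mathcal{O}_K[x]$) with no fixed square divisor attains squarefree values on a positive-density subset of integers. In the \emph{generic} case where $\Delta$ has no fixed square divisor, Granville's theorem produces a positive-density subset $B \subseteq A$ with $\Delta(n)$ squarefree for every $n \in B$. For such $n$, the divisibility $c_n^2 \mid \Delta(n)$ combined with squarefreeness of $\Delta(n)$ forces $c_n = 1$, so $\disc(L_n/K) = \Delta(n)$. Since $\Delta$ has degree $r$, it attains each value at most $r$ times, so the discriminants $\{\disc(L_n/K) : n \in B\}$ take at least $|B|/r \geq cN$ distinct values, producing $\geq cN$ distinct residue fields $L_n$.

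The main obstacle I foresee is the case where $\Delta$ has a fixed square divisor, which happens in particular whenever the geometric monodromy group of the cover lies in the alternating group $A_d$, forcing $\Delta = D^2$ for some $D \in K[x]$. There Granville's theorem applied directly to $\Delta$ returns density $0$, so one must instead work with a different polynomial invariant: for instance with $D$ itself, or with a ``reduced discriminant'' obtained by stripping the systematic square part prime-by-prime. One must then verify both that this substitute polynomial has no fixed square divisor and that its squarefree values continue to control $\disc(L_n/K)$ tightly enough. I expect making this substitution uniformly across all covers, and then re-running the argument above with it in place of $\Delta$, to be the technical heart of the proof.
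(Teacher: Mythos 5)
Your generic case is a genuinely different argument from the paper's (the paper never uses discriminants of orders), and it does work when the odd-multiplicity part of $\Delta$ is a nonconstant polynomial with no fixed square divisor. But the degenerate case you flag is a real gap, and the repair you sketch --- stripping the square part of $\Delta$ and re-running the index-formula argument --- cannot close it. Take $f(x,y)=y^3-g(x)$ with $g$ separable, so $\Delta(x)=-27\,g(x)^2$. The index relation only gives $\disc(L_n/\QQ)=-27\,g(n)^2/c_n^2=-3\,(3g(n)/c_n)^2$: the squarefree part of the field discriminant is $-3$ for \emph{every} $n$, and everything else is hidden inside a square that the uncontrolled index $c_n$ can eat arbitrarily much of. Applying Granville to $D=g$ (your suggested substitute) and learning that $g(n)$ is squarefree still only yields $c_n\mid 3g(n)$ through the index formula, which is consistent with $\disc(L_n/\QQ)=-3$; so squarefreeness of the substitute polynomial does \emph{not} ``control $\disc(L_n/K)$ tightly enough'' via your mechanism. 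To conclude anything here you must abandon the index formula and compute ramification in $K(g(n)^{1/3})/K$ directly by Kummer theory --- and for a general cover with monodromy in $A_d$ there is no such explicit handle at all. Note also that this degenerate case is not exotic: it contains every cyclic cover of odd prime degree, which is exactly the hard core of the conjecture.

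The paper is organized so as to isolate that hard core and then attack it with a different invariant. It first reduces unconditionally to Galois covers (via the Galois closure and a base-change lemma), disposes of covers whose completion has at least three geometric points over $\infty$ using the Corvaja--Zannier integral-points theorem, and uses the fact that the stabilizer of a point over $\infty$ is cyclic of index at most $2$ to reduce to $\ZZ/p$-covers $y^p=g(x)$. For these it distinguishes the fields $K(g(n)^{1/p})$ by their \emph{sets of ramified primes}, not their discriminants: if $h$ is the radical of $g$ and $(h(n))$ is squarefree away from a fixed ideal, then the primes ramified in $K(g(n)^{1/p})/K$ outside a fixed finite set are exactly those dividing $(h(n))$; the $abc$-conditional input is Granville's theorem applied to the separable polynomial $h$, plus a counting lemma ensuring the ideals $(h(n))$ are pairwise distinct. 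If you want to complete your proof, you will need both the reduction to cyclic covers (your direct approach has no tool for non-cyclic $A_d$-monodromy covers) and the switch from discriminants to ramified primes, at which point you will have reconstructed the paper's argument.
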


  We remark that the Weak Diversity Conjecture was only stated in \cite{BiluLuca2} for $K=\mathbb{Q}$, but we, in fact, prove this more 
general form under the assumption of the $abc$ Conjecture. 
Note also that for $K=\QQ$, the consequence of Conjecture \ref{strongdiversity} implies the consequence of Conjecture \ref{weakdiversity}. 
The hypotheses of Conjecture \ref{weakdiversity}, however, are weaker. In \cite{BiluLuca1}, Bilu and Luca prove Weak Diversity 
(for $K = \QQ$) in the case not covered by Strong Diversity, namely for covers where the branch points are $\QQ$-rational, and the 
  cover is abelian. They therefore conclude that Strong Diversity implies Weak Diversity for $K=\QQ$.

\begin{rem}\label{Requivalent}
The Weak and Strong diversity conjectures were stated in \cite{BiluLuca2} in terms of residue fields of a \emph{given} point in each fiber.  
In light of the quantitative version of Hilbert's irreducibility theorem (\cite[Theorem, p.\ 134]{Serre}), all fibers except negligibly many 
have only one point.  So the formulations of \cite{BiluLuca2} are equivalent to our formulations above.  For Weak Diversity, it would also 
be equivalent to look at the \emph{compositum} of the residue fields of all points in each fiber. We use this formulation in Propositions \ref{PGalois} and \ref{Pquotient}.
\end{rem}

  While this was not mentioned in earlier discussions of this conjecture, we remark that the Weak Diversity Conjecture is also closely 
  related to the following conjectural form of a uniform Faltings' Theorem. This form first appeared in \cite{seconduniform}, where Pacelli proves this conjecture under the assumption of Lang's conjecture about rational points on varieties of general type; see also 
  \cite{firstuniform}.

  \begin{conj}\label{uniformfaltings}(``Uniform Faltings' Theorem'')\,
   Let $g\geq 2$ and $d$ be natural numbers. Then there exists a 
  constant $B_{d,g}$ such that for every number field $L$ of degree $d$ over $\QQ$, and for every curve of genus 
  $g$ over $L$,  we have that $X(L)\leq B_{d,g}$.
  \end{conj}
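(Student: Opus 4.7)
The plan is to follow the conditional approach of Caporaso--Harris--Mazur \cite{firstuniform} and Pacelli \cite{seconduniform}, who derive statements of this form from Lang's conjecture on rational points of varieties of general type.

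The first step is to parametrize $N$-tuples of distinct rational points on genus-$g$ curves by a single high-dimensional variety. Specifically, consider the moduli space $\mathcal{M}_g$ of genus-$g$ curves and let $Y_{g,N}$ be the variety parametrizing pairs $(C,(P_1,\ldots,P_N))$ with $C$ a smooth genus-$g$ curve and the $P_i$ pairwise distinct points on $C$. Next, verify---using Caporaso's canonical-bundle computation---that $Y_{g,N}$ is of general type once $N$ exceeds an explicit function of $g$.

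The second step applies Lang's conjecture to $Y_{g,N}$ over a fixed number field $L$ of degree $d$: the $L$-rational points of $Y_{g,N}$ are then contained in a proper Zariski-closed subvariety $Z \subsetneq Y_{g,N}$. One then runs an inductive argument on dimension, noting that $N$-tuples of distinct $L$-points on a single genus-$g$ curve over $L$ that land in $Z$ correspond to $L$-rational points of a lower-dimensional moduli-like variety to which the same dichotomy applies. After finitely many steps of this noetherian descent, the number of such $N$-tuples is bounded by a quantity depending only on $d$ and $g$, and setting $N = B_{d,g}+1$ rules out any curve $X/L$ with $|X(L)| > B_{d,g}$.

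The main obstacle is Lang's conjecture itself, which remains entirely open in the generality required; this is precisely why Pacelli's proof is conditional. A secondary difficulty is extracting an effective bound $B_{d,g}$ from this chain of reductions, since Lang's conjecture is not asserted in quantitative form. It would be very attractive to instead deduce Conjecture \ref{uniformfaltings} directly from the $abc$ conjecture via the main result of this paper---that $abc$ implies Weak Diversity---but no immediate reduction suggests itself, because Weak Diversity concerns residue-field growth across a family of fibers of a \emph{fixed} cover $X \to \mathbb{A}^1_K$, while Uniform Faltings asks for a bound that is uniform as $X$ itself varies over all curves of a given genus over all degree-$d$ fields.
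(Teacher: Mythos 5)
This statement is labeled as a \emph{conjecture} in the paper, and the paper offers no proof of it: it is recorded only to motivate Weak Diversity, with a citation to \cite{seconduniform} for Pacelli's proof conditional on Lang's conjecture. So there is no proof in the paper to compare yours against, and your proposal --- as you yourself acknowledge in your final paragraph --- is likewise not a proof: it is conditional on Lang's conjecture, which is open. You have correctly identified the known route and correctly diagnosed why it does not close.

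Two technical imprecisions in your sketch are worth flagging. First, the Caporaso--Harris--Mazur correlation theorem does not assert that the variety $Y_{g,N}$ (essentially the $N$-th fibered power of the universal curve over $\mathcal{M}_g$, minus diagonals) is itself of general type for $N$ large; it asserts that a sufficiently high fibered power admits a \emph{dominant rational map to} a variety of general type. This weaker statement is what one actually proves and what suffices for the descent, since $Y_{g,N}$ fibers over $\mathcal{M}_g$, which need not be of general type for small $g$. Second, applying Lang's conjecture over a single fixed field $L$ only yields a bound $B_{L,g}$ depending on $L$; the uniformity over \emph{all} number fields of degree $d$, which is the whole content of the constant $B_{d,g}$ in the statement, is precisely Pacelli's contribution in \cite{seconduniform} and requires an additional restriction-of-scalars argument on top of the fibered-power machinery of \cite{firstuniform}. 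Your noetherian-descent step is the right shape, but as written it bounds $N$-tuples on curves over one field at a time. Finally, your closing observation is correct and consistent with the paper's framing: the main theorem here ($abc$ implies Weak Diversity) runs in the opposite direction --- Weak Diversity is presented as a \emph{weaker} relative of this conjecture, applicable also in genus $0$ and $1$ --- so no deduction of Uniform Faltings from the paper's results should be expected.
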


  As we will soon see (Proposition \ref{PGalois}), the Weak Diversity Conjecture can be reduced to case of $G$-Galois covers $f: X \to \AA^1$. In the Galois case, 
  Conjecture \ref{uniformfaltings} implies Weak Diversity for $g(X)\geq 2$. Indeed, for a right $G$-torsor $T$ over $K$, 
  there exists a twist $X^T$ of $X$ such that for $K$-rational points $P$ of $\mathbb{A}^1_K$, the restriction $X \times_{\AA^1} \{P\}$ is isomorphic 
  to $T$ as a right $G$-torsor iff $X^T$ has a $K$-rational point above $P$. See, for example, Lemma 3.3.1 of \cite{hilafmin}, and 
  surrounding discussion. Since all of these twists have the same genus, we are done. In this way, Weak Diversity can be viewed as a 
   weaker form of Conjecture \ref{uniformfaltings} that, unlike Conjecture \ref{uniformfaltings}, also applies to genera $0$ and $1$. 
  Note that Conjecture \ref{uniformfaltings} is not even known for twists of a given curve; see related results in this direction in 
  \cite{jacobiansilver} and \cite{jacobianstoll}.

  Strong Diversity is known in either of two cases: (a) when one of the branch points is of degree either $2$ or $3$ above $\QQ$ 
  (\cite[Theorem 2(b)]{DZ}), or (b) if the branch points are all $\QQ$-rational and the normal closure of $X\rightarrow \mathbb{A}^1_{\QQ}$ 
  satisfies some condition (for example if its Galois group is either alternating, symmetric or non-abelian simple group of non-square 
order; 
  see \cite{DZ2}). Weak Diversity (but not Strong Diversity) was also proven (\cite[Corollary 1]{CZ}) in the case that $X$ has at least $3$ 
  geometric points above $\infty$. See also 
  Proposition \ref{PCorvajaZannier}, and preliminary discussion thereof, in this paper.

  In this paper we prove that the $abc$ Conjecture (for an appropriate number field) implies Weak Diversity (Theorem \ref{Tweakdiversity}), 
and that $abc$ implies Strong Diversity for the case that not all branch points are $\QQ$-rational (Theorem \ref{Plargedegree}). We also, 
  unconditionally, reduce Weak Diversity to the cyclic Galois case.

	We mention that Mochizuki claims to have proven the Vojta conjecture for all curves over number fields (\cite[Discussion after Theorem A]{Mochizuki}), which implies the $abc$ Conjecture over number fields.  If Mochizuki's proof is verified, then Weak Diversity will hold unconditionally.

\section*{Acknowledgements}

   The authors thank Larry Washington for fruitful conversations, and Andrew Granville, Ram Murty and Taylor Dupuy for very 
thorough and helpful answers to their mathematical inquiries.

\section{Proof of the non-rational branch point case of Strong Diversity given $abc$}\label{Sstrong}

As was mentioned above, Dvornicich and Zannier proved Strong Diversity for $f: X \to \AA^1_{\QQ}$ whenever $f$ has a branch point of index $2$ or $3$.  Combining the $abc$ Conjecture with a result of Granville allows us to weaken this assumption to $f$ having a branch point not defined over the base field.

\begin{lemma}\label{Lsquarefree}
  Assume the $abc$ Conjecture.  Then
  $$n=O(\#\{p \geq n \mid v_p(g(m)) = 1 \text{ for some } m \leq n \} )$$ whenever $g \in \ZZ[x]$ is an irreducible polynomial of degree at 
  least $2$.  If $\deg g \in \{2,3\}$, then the $abc$ Conjecture is not required.
  \end{lemma}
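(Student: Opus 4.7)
My strategy is a weighted counting argument. Set $d := \deg g$ and $T := \{p \geq n : v_p(g(m)) = 1 \text{ for some } 1 \leq m \leq n\}$, so the goal is $|T| \gg n$. Let $\Sigma$ denote the sum of $\log p$ over all pairs $(m, p)$ with $1 \leq m \leq n$, $p$ a prime with $p > n$, and $v_p(g(m)) = 1$. I will bound $\Sigma$ from both sides and compare.

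For the lower bound, begin with the elementary identity $\sum_{m=1}^{n} \log |g(m)| = dn \log n + O(n)$. Writing $r_g(q)$ for the number of roots of $g$ mod $q$, Hensel's lemma gives $r_g(p^k) = r_g(p)$ for $p \nmid \disc(g)$, and one computes $\sum_{m \leq n} v_p(g(m)) = r_g(p)\, n/(p-1) + O(r_g(p) \log n / \log p)$ for each $p \leq n$. Since $g$ is irreducible, the transitive action of $\Gal(g)$ on its roots has average number of fixed points equal to $1$ (Burnside's lemma), so Chebotarev combined with Mertens yields $\sum_{p \leq n} r_g(p) \log p / p = \log n + O(1)$. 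Multiplying the preceding display by $\log p$ and summing, the contribution of primes $p \leq n$ to $\sum_m \log|g(m)| = \sum_m \sum_p v_p(g(m)) \log p$ is $n \log n + O(n)$. Subtracting,
\[
\sum_{m \leq n} \sum_{\substack{p \mid g(m) \\ p > n}} v_p(g(m)) \log p = (d-1) n \log n + O(n).
\]

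To replace $v_p \geq 1$ by $v_p = 1$, I must discard pairs with $p > n$ and $p^2 \mid g(m)$. This is exactly where the $abc$ Conjecture enters: Granville's theorem on squarefree values of polynomials implies $\#\{(m,p) : m \leq n,\ p > n,\ p^2 \mid g(m)\} = o(n)$ under $abc$. For $d = 2$ the bound is unconditional, since such a prime forces $p \leq \sqrt{Cn^2} = O(n)$, and there are only $O(n/\log n)$ primes in this range, each contributing at most $r_g(p^2) \leq 2$ values of $m$. For $d = 3$ the same bound is a consequence of Hooley's classical theorem. In all cases, each excluded pair contributes at most $v_p \log p \leq d \log(Cn^d) = O(\log n)$ to $\Sigma$, so discarding them costs only $o(n \log n)$, and thus $\Sigma \geq (d - 1 - o(1))\, n \log n$, which is $\gg n \log n$ since $d \geq 2$.

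For the upper bound on $\Sigma$, two elementary observations suffice: $\log p \leq d \log n + O(1)$ whenever $p \mid g(m)$ with $m \leq n$; and $\#\{m \leq n : p \mid g(m)\} \leq r_g(p) \leq d$ whenever $p > n$, since each residue class mod $p$ has at most one representative in $[1, n]$. Combining, $\Sigma \leq d (d \log n + O(1)) \cdot |T|$, and comparison with the lower bound yields $|T| \gg n$, as required. The crux of the argument is the step eliminating large-prime square divisors: for $\deg g \geq 4$ this genuinely requires $abc$ via Granville's theorem, while for $\deg g \in \{2,3\}$ the direct argument (respectively Hooley's method) suffices.
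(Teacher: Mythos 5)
Your proof is correct and takes essentially the same route as the paper's: both decompose the problem into an unconditional lower bound on the number of primes $p>n$ dividing some $g(m)$ with $m\le n$ (the paper cites Eq.\ (1) of Dvornicich--Zannier, whose proof is precisely your weighted $\sum \log p$ comparison), plus an $o(n)$ bound on the exceptional large primes with $p^2\mid g(m)$, which is exactly where $abc$ and Granville's theorem enter (with the elementary range argument for degree $2$ and Hooley's method for degree $3$ replacing the conditional input, as in Dvornicich--Zannier). The only difference is that you re-derive the cited unconditional count from first principles rather than quoting it.
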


  \begin{proof}
  By \cite[Eq.\ (1) on p.\ 427]{DZ}, the lemma is true unconditionally if $v_p(g(m)) = 1$ is replaced by $p \mid g(m)$.  So it suffices to 
  show that 
  $$\#\{p \geq n \mid v_p(g(m)) > 1 \text{ for some } m \leq n \} = o(n).$$
  
  If $\deg g \in \{2, 3\}$, this 
  follows as on \cite[p.\ 427]{DZ}, without the $abc$ Conjecture.  In any case, if $\deg g \geq 3$, this follows from \cite[Theorem 
8]{Granville} applied to the homogenization of $g$, taking $N = 
  n$ and $M = 1$.
  \end{proof}

  \begin{thm}\label{Plargedegree}
  Suppose that the branch locus $\Delta$ of $f: X \to \AA^1_{\QQ}$ contains a point of degree $\geq 2$ over $\QQ$, and that the $abc$ Conjecture is true.  Then Strong Diversity holds for $f$. 
  \end{thm}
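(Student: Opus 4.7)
Let $\alpha \in \overline{\QQ}$ be a branch point of $f$ of degree $\geq 2$ over $\QQ$, with minimal polynomial $g \in \ZZ[x]$, and let $e \geq 2$ be the ramification index of $f$ at $\alpha$. Denote by $L_m$ the compositum of residue fields of the fiber $f^{-1}(m)$, so that $k(N)$ contains $L_m$ for every $m \in [1, N]$ (cf.\ Remark \ref{Requivalent}). The plan is to build an increasing chain $\QQ = M_0 \subsetneq M_1 \subsetneq M_2 \subsetneq \cdots \subseteq k(N)$ that grows by a factor of at least $2$ at each step, giving $[k(N):\QQ] \geq 2^{cN}$.

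The first step is to invoke Lemma \ref{Lsquarefree} to obtain a set $P$ of $\gg N$ primes $p \geq N$ such that $v_p(g(m_p)) = 1$ for some $m_p \in [1, N]$. For such $p$, the unique prime $\mathfrak{p}$ of $\QQ(\alpha)$ above $p$ with $v_\mathfrak{p}(m_p - \alpha) = 1$ has trivial residue and ramification degree over $p$, so $\QQ(\alpha)_\mathfrak{p} \cong \QQ_p$. The local Kummer form of $f$ at $\alpha$, namely $y^e = u(x - \alpha)$ with $u$ a unit in the completed local ring, then forces the residue field of some fiber point of $f$ above $m_p$, after base change to $\QQ(\alpha)$, to be totally ramified of degree $e$ at $\mathfrak{p}$. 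Because $\QQ(\alpha)_\mathfrak{p} = \QQ_p$, a local-to-global descent yields that $p$ already ramifies in $L_{m_p}$ over $\QQ$.

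Second, I would bound how many primes ramify in each $L_m$. For $m \in [1, N]$, the primes $p \geq N$ ramifying in $L_m$ divide $\prod_\beta g_\beta(m)$ over the ramified branch orbits $\beta$ of $f$; since $|g_\beta(m)| = O(N^{\deg g_\beta})$ admits at most $\deg g_\beta + O(1)$ prime factors exceeding $N$, summing over the finitely many orbits gives $|R_m \cap [N, \infty)| \leq B$ for some constant $B = B(f)$. Now run a greedy construction: inductively pick $p_{i+1} \in P$ that is unramified in $M_i := \prod_{j \leq i} L_{m_{p_j}}$. At step $i+1$, the primes in $P$ already ramified in $M_i$ number at most $iB$, so the procedure continues for $\lfloor |P|/B \rfloor \gg N$ steps. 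At each step, $L_{m_{p_{i+1}}} \not\subseteq M_i$ (since $L_{m_{p_{i+1}}}$ ramifies at the new prime $p_{i+1}$ while $M_i$ does not), so $[M_{i+1} : M_i] \geq 2$. Consequently $[k(N):\QQ] \geq [M_{\Omega(N)}:\QQ] \geq 2^{\Omega(N)} = e^{cN}$, as desired.

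The main obstacle is the local-to-global ramification descent in the second paragraph --- verifying that ramification at $\mathfrak{p}$ in the base-changed residue field implies ramification at $p$ in the original $\QQ$-residue field. This should follow cleanly from the identification $\QQ(\alpha)_\mathfrak{p} = \QQ_p$: because the base change $\QQ(\alpha)/\QQ$ is unramified at $p$, an étale $\QQ$-algebra and its base change to $\QQ(\alpha)$ have the same ramification behavior at the corresponding primes. A secondary technical point is to ensure that the local Kummer normal form $y^e = u(x-\alpha)$ yields an honest fiber point whose residue field (rather than merely some henselization) sees the predicted ramification, which is standard but must be checked.
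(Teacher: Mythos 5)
Your proposal is correct and takes essentially the same approach as the paper: both hinge on applying Lemma \ref{Lsquarefree} to the minimal polynomial of the non-rational branch point, and your second and third paragraphs reconstruct from scratch the ramification-counting tower argument that the paper simply imports from the proof of \cite[Theorem 2(b)]{DZ} after reducing to a plane curve. The two specialization facts you flag as needing care (that $v_{\mathfrak{p}}(m_p-\alpha)=1$ forces ramification at $p$ in a residue field of the fiber over $m_p$, and that each fiber has boundedly many ramified primes $\geq N$) are standard Beckmann-type statements valid for all but finitely many $p$, which is harmless here since the primes in play satisfy $p \geq N$.
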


  \begin{proof}
  Let $X'$ be a plane curve such that $X \dashrightarrow X' \stackrel{f'}{\to} \AA^1_{\QQ}$ is a factorization of $f$ as a rational map 
with $X \dashrightarrow X'$ birational. To prove Strong Diversity for $f$, it suffices to prove it for $f'$. 
  
  Since $X'$ is a plane curve, we are in the situation of \cite{DZ}.  If $\Delta$ has a point of degree $2$ or $3$ over $\QQ$, then this is \cite[Theorem 2(b)]{DZ}.  The only input to the proof in \cite{DZ} that requires $\Delta$ to have a point of degree $2$ or $3$ is the result of Lemma 
  \ref{Lsquarefree} for some irreducible factor $g$ of a polynomial cutting out $\Delta$ (see \cite[(11), p.\ 437]{DZ}).  By our assumptions 
on 
  $\Delta$, there is such a factor of degree $\geq 2$.  Since we assume the $abc$ Conjecture, the proposition follows from Lemma 
  \ref{Lsquarefree}.
  \end{proof}

  \section{Unconditional reduction of Weak Diversity to cyclic case}\label{Sreduction}
  
  In this section, we reduce Weak Diversity to the case of cyclic covers of prime order.  We do not assume the $abc$ Conjecture.  
   
\begin{lemma}\label{Lbasechange}
If a cover $f: X \to \AA^1_{K_0}$ is defined over $K_0$, then Weak Diversity for $f$ is equivalent to Weak Diversity for any base change $f_K$ over a number field extension $K/K_0$.
\end{lemma}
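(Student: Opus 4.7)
The plan is to use the compositum formulation of Weak Diversity from Remark \ref{Requivalent}. Fix algebraic closures $\overline{K_0} \subseteq \overline{K}$, and for each positive integer $m$ let $k(m) \subseteq \overline{K_0}$ (resp.\ $k_K(m) \subseteq \overline{K}$) denote the compositum of the residue fields of all points in the scheme-theoretic fiber of $f$ (resp.\ $f_K$) over $m$. The key observation is that $k_K(m) = k(m) \cdot K$ inside $\overline{K}$: indeed $(X_K)_m = X_m \times_{K_0} K$, so every residue field of $(X_K)_m$ is a quotient of some $\kappa(P) \otimes_{K_0} K$ for $P \in X_m$, and hence coincides with $\kappa(P) \cdot K$ inside $\overline{K}$.

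For the direction ``Weak Diversity for $f_K$ implies Weak Diversity for $f$'', I would observe that $k(m_1) = k(m_2)$ forces $k_K(m_1) = k_K(m_2)$; taking contrapositives,
\[
\#\{k(m) : 1 \leq m \leq N\} \;\geq\; \#\{k_K(m) : 1 \leq m \leq N\},
\]
so a $cN$ lower bound on the right transfers to the left.

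For the converse direction, I would bound the multiplicity of the map $k(m) \mapsto k(m)\cdot K$. Since $k(m)$ sits in the splitting field of the fiber, $[k(m):K_0] \leq (\deg f)!$, and therefore $[k_K(m):K_0] \leq (\deg f)! \cdot [K:K_0]$. A number field of bounded degree has only a bounded number of subfields, so there is a constant $C_0 = C_0(f,K)$ such that at most $C_0$ distinct values of $k(m)$ can have the same compositum with $K$. Hence
\[
\#\{k_K(m) : 1 \leq m \leq N\} \;\geq\; \#\{k(m) : 1 \leq m \leq N\}/C_0,
\]
and a $cN$ lower bound on the $k(m)$-count yields a $(c/C_0)N$ lower bound on the $k_K(m)$-count. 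I do not anticipate any serious obstacle beyond the identity $k_K(m) = k(m)\cdot K$ and the uniform bound on $[k(m) : K_0]$; the only book-keeping is that the finitely many $m$ lying in the branch locus of $f$ (equivalently of $f_K$) affect each count by $O(1)$ and so do not disturb the linear rate.
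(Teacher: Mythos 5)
Your proposal is correct and follows essentially the same route as the paper: both rest on the identity that base-changing a residue field amounts to taking its compositum with $K$, that distinct composita force distinct fields (one direction), and that only boundedly many number fields can share a given compositum with $K$ (the other direction). Your version merely makes explicit, via the degree bound $[k(m):K_0] \leq (\deg f)!$, the uniformity of the constant $C_0$ that the paper leaves implicit.
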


\begin{proof}
The residue field of a point in $f_K^{-1}(n)$ is the compositum of the residue field of the corresponding point of $f^{-1}(n)$ with $K$. If 
two number fields have distinct composita with $K$, they must be distinct.  On the other hand, there are only finitely many distinct number 
fields whose composita with $K$ are identical.  The lemma follows.
\end{proof}

  \begin{prop}\label{PGalois}
  Suppose that $f: X \to \AA^1_K$ is a cover defined over $K$ and   
  $L/K$ is a finite extension for which the Galois closure $f': X' \to \AA^1_L$ of the base-change $f_L$ of $f$ to $L$ is geometrically irreducible and defined over $L$ as a Galois cover. Then to prove Weak Diversity for $f$, it suffices to prove it for $f'$.
  \end{prop}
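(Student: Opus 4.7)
The plan is to first apply Lemma~\ref{Lbasechange} to reduce to the case $K = L$, so that both $f$ and $f'$ are defined over the same field $L$, and then to invoke Remark~\ref{Requivalent} to switch to the compositum-of-residue-fields formulation of Weak Diversity. Let $M_n \subseteq \overline{L}$ denote the compositum of the residue fields of all points in $f^{-1}(n)$, and let $M'_n$ denote the analogous compositum for $(f')^{-1}(n)$. The entire proposition will follow at once from the claim that $M_n = M'_n$ for every $n \in L$, since then the counts of distinct fields in the two sequences coincide.

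To prove the claim, I would characterize $M_n$ group-theoretically as the unique subfield of $\overline{L}$ satisfying
$$\Gal(\overline{L}/M_n) = \{\sigma \in \Gal(\overline{L}/L) \colon \sigma \text{ acts trivially on } X_n(\overline{L})\},$$
and similarly for $M'_n$. The inclusion $M_n \subseteq M'_n$ is immediate from the $\Gal(\overline{L}/L)$-equivariant surjection $X'_n(\overline{L}) \twoheadrightarrow X_n(\overline{L})$.

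For the reverse inclusion, I would use crucially that $f'$ is the \emph{Galois closure} of $f$. Setting $G = \Gal(X'/\AA^1_L)$ and letting $H \leq G$ denote the stabilizer of a geometric point of $X$ in $X'$, the minimality characterization of Galois closures forces $H$ to be core-free in $G$, i.e.\ $\bigcap_{g \in G} g H g^{-1} = \{1\}$. Fixing a geometric base point above $n$ identifies $X'_n(\overline{L})$ with $G$ and encodes the $\Gal(\overline{L}/L)$-action through a homomorphism $\phi_n : \Gal(\overline{L}/L) \to G$; the induced action on $X_n(\overline{L}) \cong G/H$ is then simply $\phi_n$ followed by left translation on cosets. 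Consequently $\sigma$ fixes $X_n(\overline{L})$ pointwise iff $\phi_n(\sigma) \in \bigcap_g g H g^{-1} = \{1\}$, iff $\sigma \in \ker\phi_n$, iff $\sigma$ fixes $X'_n(\overline{L})$ pointwise. Hence $\Gal(\overline{L}/M_n) = \Gal(\overline{L}/M'_n)$, and $M_n = M'_n$.

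I do not expect a serious obstacle: the argument is essentially a direct bookkeeping exercise whose one substantive input is that core-freeness of $H$ in $G$, built into the very definition of Galois closure, forces the field of definition of the entire geometric fiber of $f$ over $n$ to agree with that of $f'$.
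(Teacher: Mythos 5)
Your proposal is correct, and its skeleton (reduce to $K=L$ via Lemma \ref{Lbasechange}, then compare composita of residue fields fiber by fiber) matches the paper's; but the key fiberwise step is carried out differently and proves something stronger. The paper argues softly: $L'_n$ is Galois over $K$, contains $L_n$, and sits inside the Galois closure of $L_n$, hence \emph{equals} that Galois closure; since the Galois closure is a function of $L_n$, distinct $L'_n$'s force distinct $L_n$'s, which is all that is needed. You instead prove the exact equality $M_n = M'_n$ by an explicit monodromy computation: identifying the $\Gal(\overline{L}/L)$-action on the fiber of the Galois cover with left translation via $\phi_n$ and using that the point stabilizer $H$ is core-free in $G$ (the defining property of the Galois closure), so that the pointwise stabilizer of $X_n(\overline{L})$ is $\phi_n^{-1}(\bigcap_g gHg^{-1}) = \ker\phi_n$, the same as for $X'_n(\overline{L})$. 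Your computation is correct (and, combined with the paper's observation, shows $M_n$ is already Galois over $L$); it buys a cleaner conclusion --- the two sequences of composita literally coincide --- at the cost of setting up base points, the torsor structure, and the arithmetic monodromy homomorphism, all of which require $n$ to avoid the (finitely many, hence harmless) branch points and require the deck transformations to be defined over $L$, which is exactly the ``defined over $L$ as a Galois cover'' hypothesis. The paper's route avoids all of this bookkeeping by never needing the equality, only the well-definedness of the assignment $L_n \mapsto L'_n$. One small slip to fix: the claim should be asserted for $n \in \NN$ (or for $L$-rational points of $\AA^1$ outside the branch locus), not ``for every $n \in L$.''
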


  \begin{proof}
By Lemma \ref{Lbasechange}, we may assume that $L = K$ and $f_L = f$.
Let $L_n$ (resp.\ $L'_n$) be the field generated by the residue fields of the points of $f^{-1}(n)$ (resp.\ $(f')^{-1}(n)$).   We note 
that $L'_n$ is Galois over $K$ and is contained in the Galois closure of $L_n$ over $K$.  So $L'_n$ is the Galois closure of $L_n$ over $K$.  So if $L'_i \neq L'_j$, then $L_i \neq L_j$.  Thus Weak Diversity for $f'$ implies Weak Diversity for $f$.
  \end{proof}

  \begin{prop}\label{Pquotient}
  Suppose $f: X \to \AA^1_K$ is a quotient cover of $g: Y \to \AA^1_K$.  Then Weak Diversity is true for $g$ if it is true for $f$.
  \end{prop}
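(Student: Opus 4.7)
The plan is to use the observation that residue fields for the finer cover $g$ contain those for the quotient cover $f$, combined with the fact that the residue-field composita have uniformly bounded degree and therefore only a bounded number of subfields.

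First, by definition $f$ being a quotient cover of $g$ gives a factorization $Y \stackrel{h}{\to} X \stackrel{f}{\to} \AA^1_K$ with $g = f \circ h$. For a $K$-point $n$ of $\AA^1_K$, I let $L_n$ (resp.\ $M_n$) denote the compositum over $K$ of the residue fields of the points of $f^{-1}(n)$ (resp.\ $g^{-1}(n)$); by Remark \ref{Requivalent} this is an equivalent formulation of Weak Diversity, and is the one I intend to use. Since $h$ restricts to a map $g^{-1}(n) \to f^{-1}(n)$ under which $K(h(y)) \hookrightarrow K(y)$, taking composita yields $L_n \subseteq M_n$ for every $n$.

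Next, I would observe that $[M_n : K]$ is uniformly bounded in $n$: the fiber $g^{-1}(n)$ is cut out by a polynomial of degree $d = \deg g$, so $M_n$ embeds in its splitting field over $K$, giving $[M_n : K] \leq d!$. In particular there is a constant $C$ depending only on $g$ such that each possible value of $M_n$ has at most $C$ subfields over $K$.

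Finally, setting $\mathcal{L}_N = \{L_n : 1 \leq n \leq N\}$ and $\mathcal{M}_N = \{M_n : 1 \leq n \leq N\}$, the containment $L_n \subseteq M_n$ together with the subfield bound yields
\[
|\mathcal{L}_N| \;\leq\; \sum_{M \in \mathcal{M}_N} \#\{L \in \mathcal{L}_N : L \subseteq M\} \;\leq\; C \cdot |\mathcal{M}_N|.
\]
Weak Diversity for $f$ (i.e.\ $|\mathcal{L}_N| \geq cN$ for some $c>0$ and all $N$) then immediately gives $|\mathcal{M}_N| \geq (c/C) N$, which is Weak Diversity for $g$. The only step with any real content is the uniform bound on $[M_n : K]$, but this follows at once from $g$ having a fixed generic degree, so I do not expect a genuine obstacle.
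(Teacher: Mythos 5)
Your proposal is correct and follows essentially the same route as the paper's own proof: the containment $L_n \subseteq M_n$ of residue-field composita, the uniform bound on $[M_n:K]$ coming from $\deg g$, and the resulting bound of at most $C$ possible $L_n$ per value of $M_n$, yielding $|\mathcal{M}_N| \geq (c/C)N$. Your write-up is in fact slightly more careful in stating the final counting inequality in the correct direction (distinct fields for $f$ bounding distinct fields for $g$ from below).
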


  \begin{proof}
  Let $L_n$ (resp.\ $L'_n$) be the field generated by the residue fields of the points of $f^{-1}(n)$ (resp.\ $g^{-1}(n)$).  
  Then $L_n \subseteq L'_n$ and the degree of $L'_n$ over the base field is bounded in terms of $g$, which means that there exists $d \in \NN$ such that each $L'_i$ can correspond to at most $d$ non-isomorphic 
  $L_j$’s.  So if the number 
  of distinct $L'_n$ for $n \leq N$ is at least $cN$, then the number of distinct $L_n$ for $n \leq N$ is at least $cN/d$. 
  \end{proof}

  Proposition \ref{PCorvajaZannier} below was stated in \cite{BiluLuca2} as a consequence of \cite[Corollary 1]{CZ}, but we supply some more details on the proof here.  Recall that if $L$ is a number field and $S$ is a finite set of places containing the archimedean places, then $\mc{O}_{L,S} \subset L$ is the subring of $L$ consisting of elements whose valuations at all places outside of $S$ are nonnegative.



  \begin{prop}\label{PCorvajaZannier}

Let $f: X \to \AA^1_{K_0}$ be a branched cover defined over a number field $K_0$. If the smooth projective completion of $f$ has at 
  least three $\ol{\QQ}$-points over $\infty$, then Weak Diversity holds for $f$.   
  \end{prop}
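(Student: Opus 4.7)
The plan is to reduce to the Galois case via Proposition \ref{PGalois} and then translate \cite[Corollary 1]{CZ} into a linear count on distinct residue fields.

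By Lemma \ref{Lbasechange} and Proposition \ref{PGalois}, I would assume after enlarging the base that $f: X \to \AA^1_K$ is a Galois cover with group $G$ defined over a number field $K$. Since any $\ol{\QQ}$-point of the original cover above $\infty$ lifts to at least one point of the Galois closure, and base change only adds geometric points, the hypothesis that $f$ has at least three $\ol{\QQ}$-points above $\infty$ is preserved.

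In this Galois setting, each $n \in \NN$ away from the branch locus makes $f^{-1}(n)$ a right $G$-torsor over $K$, classified by a continuous homomorphism $\phi_n: \Gal(\ol{K}/K) \to G$ (up to conjugacy). The common residue field of all points in the fiber equals $L_n := \ol{K}^{\ker(\phi_n)}$, which is Galois over $K$ of degree $|\im(\phi_n)| \leq |G|$. In particular, $L_n \subseteq L$ if and only if the torsor trivializes over $L$, equivalently $f^{-1}(n)(L) \neq \emptyset$. For each cohomology class $[\phi] \in H^1(K, G)$ one obtains a twist $X^\phi$ of $X$ together with a map $f^\phi: X^\phi \to \AA^1_K$, whose smooth projective completion is isomorphic to that of $X$ over $\ol{K}$ and so still has at least three $\ol{\QQ}$-points above $\infty$. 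The translation principle (cf.\ \cite[Lemma 3.3.1]{hilafmin}) gives $\phi_n = \phi$ (up to conjugacy) if and only if $(f^\phi)^{-1}(n)(K) \neq \emptyset$.

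Applying \cite[Corollary 1]{CZ} to each twist $f^\phi$ shows that, for each individual class $[\phi]$, only finitely many $n \in \NN$ satisfy $\phi_n = \phi$. The crucial step is to upgrade this to a \emph{uniform} bound: there should be a constant $C$, depending only on $f$ (and not on the specific twist), such that
\[
\#\{n \leq N : L_n = M\} \leq C
\]
for every Galois extension $M/K$ of degree at most $|G|$. Granting this, one concludes
\[
N = \sum_{M} \#\{n \leq N : L_n = M\} \leq C \cdot \#\{L_n : n \leq N\},
\]
which is exactly the Weak Diversity lower bound.

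The main obstacle, and presumably the reason for the ``more details'' the authors provide, is establishing the uniform constant $C$. I would try to extract this either from the proof of \cite[Corollary 1]{CZ} (the Schmidt subspace theorem machinery depends on geometric invariants of the cover, which are preserved by twisting) or by an explicit bounding argument using the finite set of conjugacy classes of subgroups of $G$ that can arise as $\im(\phi_n)$, together with a descent that replaces twists by a bounded family of auxiliary covers to which \cite[Corollary 1]{CZ} can be uniformly applied.
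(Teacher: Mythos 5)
Your reduction to the Galois case and the torsor/twist formalism are fine as far as they go, but the step you yourself flag as ``the main obstacle'' --- the uniform constant $C$ over all twists --- is a genuine gap, and the twisting route does not obviously close it. The difficulty is not the geometric invariants (genus and number of points at infinity are indeed preserved by twisting); it is the set $S$. To apply \cite[Corollary 1]{CZ} to a twist $f^\phi: X^\phi \to \AA^1_K$ and count its $K$-points above integers, you must realize those points as $\mc{O}_{K,S}$-integral points, and the set $S$ must contain the places of bad reduction of the twisted integral model --- which include the primes ramified in the corresponding extension $M/K$. As $M$ ranges over the infinitely many fields that can occur, $\#S$ is unbounded, so applying \cite[Corollary 1]{CZ} twist by twist yields bounds that degenerate, and no uniform $C$ falls out. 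Your two suggested repairs (re-examining the subspace-theorem machinery, or a ``descent to a bounded family of auxiliary covers'') are left entirely unexecuted, and the second is not clearly meaningful.

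The paper sidesteps all of this by base-changing the field rather than twisting the curve, and needs neither the Galois reduction nor torsors. Fix an affine embedding $X \subset \AA^m_{K_0}$ and a finite set $S_0$ of places of $K_0$ such that $\mc{O}_{K_0,S_0}[X]$ is generated over $\mc{O}_{K_0,S_0}[t]$ by roots of monic polynomials. Then any point of $f^{-1}(n)$ with residue field $M$ is an $\mc{O}_{M,S}$-integral point of the \emph{original} curve $X$, where $S$ is the set of places of $M$ above $S_0$. Since $[M:\QQ] \leq \deg(f)\,[K_0:\QQ]$ and $\#S \leq [M:K_0]\,\#S_0$, both quantities are bounded purely in terms of $f$, and the uniformity built into \cite[Corollary 1]{CZ} --- the bound depends only on the degree of the number field and the cardinality of $S$, not on the field itself --- immediately gives the constant $C$ you were missing. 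Distinct $n$ give distinct points, so at most $C$ values of $n$ can produce a given residue field $M$, and Weak Diversity follows. If you want to salvage your write-up, replace the twisting step with this observation; the rest of your counting argument then goes through verbatim.
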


  \begin{proof}
 Embed $X \subset \AA^m_{K_0}$ as an affine curve. If $K/K_0$ is a finite extension and $S$ is a finite set of places of $\mc{O}_K$ including the archimedean places, \cite[Corollary 1]{CZ} implies that the number of $\mc{O}_{K,S}$-integral points of $X$ is bounded in terms of the degree of $K$ and the cardinality of $S$.  Now, since the ring extension $K_0[X]/K_0[t]$ corresponding to $f$ is generated by roots of finitely many monic polynomials over $K_0$, there is a finite set of places $S_0$ of $K_0$ such that the same is true for $\mc{O}_{K_0, S_0}[X]/\mc{O}_{K_0, S_0}[t]$.  Taking $S$ to be the set of places of $K$ lying above $S_0$, we see that every $K$-point of $X$ lying above an $\mc{O}_{K_0, S_0}$-point of $\AA^1_{\mc{O}_{K_0, S_0}}$ is in fact an $\mc{O}_{K,S}$-point.  Thus, the number of such points is bounded solely in terms of the degree of $K$. 

  Since any field $L$ arising as the residue field of a point of $f^{-1}(n)$ for $n \in \NN$ has degree at most $\deg(f)$ over $K_0$, there is an absolute bound, depending only on $f$, on the number of such points with residue field $L$.  This 
  immediately implies Weak Diversity for $f$.
  \end{proof}

  \begin{prop}\label{Pcyclic}
  To prove Weak Diversity for a cover defined over a number field with a given branch locus $\Delta$, it suffices to prove it 
  for cyclic covers of prime order with branch locus contained in $\Delta$.
  \end{prop}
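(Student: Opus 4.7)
The plan is to case-split on the inertia at infinity, using all three preceding results of this section in concert. First, by Proposition \ref{PGalois}, I reduce to the situation where $f: X \to \AA^1_L$ is Galois over some finite extension $L/K$, with Galois group $G$ and branch locus contained in (the base change of) $\Delta$. The trivial case $|G| = 1$ can be discarded.

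Let $\bar X \to \PP^1_L$ be the smooth projective completion and let $I \leq G$ be the inertia subgroup at a geometric point of $\bar X$ above $\infty$. Since we are in characteristic zero, ramification at $\infty$ is tame, so $I$ is cyclic, and the number of $\overline{\QQ}$-points of $\bar X$ over $\infty$ equals $[G:I]$. If $[G:I] \geq 3$, then Proposition \ref{PCorvajaZannier} delivers Weak Diversity for $f$ unconditionally, with no appeal to the cyclic hypothesis at all.

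Otherwise $[G:I] \leq 2$, and I claim that $G$ then admits a normal subgroup $H$ such that $G/H$ is cyclic of prime order. If $G = I$, then $G$ itself is cyclic and nontrivial, and $H$ can be taken to be the unique subgroup of index $p$ for any prime $p \mid |G|$. If $[G:I] = 2$, then $I$ is automatically normal, and I take $H = I$, yielding $G/H \cong \ZZ/2\ZZ$. In either case, $g: X/H \to \AA^1_L$ is a cyclic cover of prime order whose branch locus is contained in that of $f$, and Proposition \ref{Pquotient} converts the assumed Weak Diversity for $g$ into Weak Diversity for $f$.

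The one substantive observation is that the Galois groups admitting no cyclic quotient of prime order --- the perfect groups --- are precisely the ones for which the inertia dichotomy forces $[G:I] \geq 3$, so that Proposition \ref{PCorvajaZannier} exactly plugs the gap left by Proposition \ref{Pquotient}. Beyond this, the proof is routine bookkeeping, since the earlier results in the section supply every reduction needed.
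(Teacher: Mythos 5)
Your proof is correct and follows essentially the same route as the paper's: reduce to the Galois case via Proposition \ref{PGalois}, then split on whether there are at least three $\ol{\QQ}$-points over $\infty$ (handled unconditionally by Proposition \ref{PCorvajaZannier}) or at most two (in which case the cyclic inertia group at $\infty$ has index at most $2$, so $G$ is cyclic or has a $\ZZ/2$ quotient, and Proposition \ref{Pquotient} finishes). The only quibble is your closing claim that the perfect groups are \emph{precisely} those with $[G:I]\geq 3$ --- that equivalence fails in one direction (plenty of non-perfect $G$ have $[G:I]\geq 3$) and is in any case unnecessary, so it does not affect the argument.
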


  \begin{proof}
  By Lemma \ref{Lbasechange} and Proposition \ref{PGalois}, we may assume the cover is Galois for some group $G$.  If the cover has at 
least three $\ol{\QQ}$-points 
  defined over $\infty$, then the proposition follows from Proposition \ref{PCorvajaZannier}, so assume there are at most two such points.  
  Then the stabilizer of one of these points is a cyclic group of index at most $2$ in $G$.  So either $G$ is cyclic or $G$ has $\ZZ/2$ as a 
  quotient.  In either case, $G$ has a cyclic group of prime order as a quotient, and the quotient cover has branch locus contained in 
  $\Delta$, so we are done by Proposition \ref{Pquotient}.  
  \end{proof}

\begin{rem}
The most difficult case for the Weak Diversity Conjecture seems to be that of a \emph{quadratic} cover.  In this case, it is tantamount to showing that for a separable polynomial $f \in K[x]$, the number of distinct square classes in the set $\{f(1), \ldots, f(N)\}$ is at least $cN$ for some constant $c > 0$ and all $N$.
\end{rem}

\section{Proof of Weak Diversity given $abc$}

\begin{lemma}\label{Lnorm}
Let $K$ be a number field, and let $f(x) \in \mc{O}_K[x]$ be a non-constant polynomial.  Then there is a constant $c$, depending on $f$, such that for any ideal $I \subseteq \mc{O}_K$, the set $\{n \in \NN \mid (f(n)) = I\}$ has cardinality bounded by $c$.
\end{lemma}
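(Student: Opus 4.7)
The plan is to replace the ideal condition $(f(n)) = I$ by the weaker condition of equality of absolute norms, which I can recast as a single integer polynomial equation in $n$.

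First I would observe that if $(f(n_1)) = (f(n_2)) = I$, then taking absolute ideal norms gives $|N_{K/\QQ}(f(n_1))| = N(I) = |N_{K/\QQ}(f(n_2))|$, via the standard identity $N((\alpha)) = |N_{K/\QQ}(\alpha)|$ for principal ideals of $\mc{O}_K$. So the problem reduces to bounding $\{n \in \NN \mid |N_{K/\QQ}(f(n))| = N(I)\}$.

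Next I would form the norm polynomial $g(x) := \prod_{\sigma} f^{\sigma}(x)$, where $\sigma$ runs over the embeddings $K \hookrightarrow \overline{\QQ}$ and $f^{\sigma}$ denotes the result of applying $\sigma$ to the coefficients of $f$. The product is Galois-invariant, and since $f$ has coefficients in $\mc{O}_K$, its coefficients are algebraic integers fixed by $\Gal(\overline{\QQ}/\QQ)$, hence $g \in \ZZ[x]$. Moreover $\deg g = [K:\QQ] \cdot \deg f \geq 1$, so $g$ is a non-constant polynomial, and for $n \in \NN$ we have $g(n) = N_{K/\QQ}(f(n))$.

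The set in the statement is then contained in $\{n \in \NN \mid g(n) = N(I)\} \cup \{n \in \NN \mid g(n) = -N(I)\}$. Each of these is the solution set of a polynomial equation of degree $\deg g$ with integer coefficients, so has at most $\deg g$ elements, giving a bound of $c = 2[K:\QQ]\deg f$ independent of $I$. There is really no main obstacle here; the only edge case is $I = (0)$, if allowed, in which case the set reduces to $\{n \mid f(n) = 0\}$, of size at most $\deg f$, so the same bound works.
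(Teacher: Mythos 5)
Your proposal is correct and follows essentially the same route as the paper: both reduce the ideal equality to the statement that $N_{K/\QQ}(f(n))$ is a fixed constant, and both conclude by noting that $n \mapsto N_{K/\QQ}(f(n))$ is a non-constant polynomial (the paper argues non-constancy via the absolute value tending to infinity, while you compute the degree $[K:\QQ]\deg f$ directly, which also yields an explicit bound). The extra care you take with the sign of the norm and the edge case $I=(0)$ is fine but not a substantive difference.
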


\begin{proof}
It suffices to bound the number of $n$ such that $N_{K/\QQ}(f(n))$ equals any particular constant.  But $N_{K/\QQ}(f(n))$ is a polynomial in $n$ over $\QQ$, whose absolute value is easily seen to go to $\infty$ as $n \to \infty$.  Thus it is non-constant, and the lemma follows.
\end{proof}

\begin{thm}\label{Tweakdiversity}
Let $f: X \to \AA^1_K$ be a geometrically 
irreducible branched cover over some number field $K$, and let $L$ be a number field such that each branch point of $f$ is $L$-rational.  Then the $abc$ Conjecture for $L$\footnote{See, e.g., \cite[p.\ 84]{Vojta}} implies Weak Diversity holds for $f$.
\end{thm}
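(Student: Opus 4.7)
My plan is first to reduce, via Proposition~\ref{Pcyclic} and Lemma~\ref{Lbasechange}, to the case where $f$ is a cyclic cover of prime order $p$ with $L$-rational branch points, over a base field $K$ containing $L(\zeta_p)$.  Such a cover is birational to $y^p = g(x)$ with $g(x) = c\prod_{i=1}^d (x-a_i)^{e_i}$, where (after rescaling $x$) the $a_i \in \mc{O}_L$ are distinct, $e_i \in \{1,\ldots,p-1\}$, and $c \in K^*$.  By Remark~\ref{Requivalent}, it suffices to show that the map $n \mapsto g(n) K^{*p}$ from $\{1,\ldots,N\}$ has image of size $\gg N$ in $K^*/K^{*p}$.

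I then split into two cases.  If every $a_i$ lies in $\QQ$, the cover is a twist by $c$ of a cover defined over $\QQ$ with $\QQ$-rational branch points; writing $h(x) = \prod(x-a_i)^{e_i}$, one checks that $K(\sqrt[p]{c\,h(n)}) = K(\sqrt[p]{c\,h(m)})$ if and only if $h(n)/h(m) \in K^{*p}$, so the number of distinct residue fields equals the number of distinct classes $h(n) K^{*p}$; this reduces to Weak Diversity for an abelian cover over $\QQ$ with $\QQ$-rational branch points, which holds unconditionally by \cite{BiluLuca1} combined with Lemma~\ref{Lbasechange}.  Otherwise some $a_j \notin \QQ$; WLOG $a_1$, and let $m_1 \in \ZZ[x]$ be its minimal polynomial, of degree $\geq 2$.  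Since $abc$ for $L$ implies $abc$ for $\QQ$, Lemma~\ref{Lsquarefree} applied to $m_1$ yields a subset $T \subseteq \{1,\ldots,N\}$ of size $\gg N$ such that for each $n \in T$, some rational prime $p^* \geq N$ satisfies $v_{p^*}(m_1(n)) = 1$.  From $m_1(n) = N_{\QQ(a_1)/\QQ}(n-a_1)$, the equation $\sum_{\mfq \mid p^*} f_\mfq v_\mfq(n-a_1) = 1$ forces a unique prime $\mfq \subset \mc{O}_{\QQ(a_1)}$ above $p^*$ of residue degree one with $v_\mfq(n-a_1) = 1$; for $p^*$ exceeding the finitely many primes dividing $c$ or any nonzero $a_1 - a_j$, any prime $\mfP \subset \mc{O}_K$ above $\mfq$ has residue characteristic $p^* \geq N$ and satisfies $v_\mfP(g(n)) = e_1$, coprime to $p$.

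The proof then concludes by counting: if $g(n) K^{*p} = g(m) K^{*p}$ with $n,m \in T$, then $\mfP \mid g(m)$, so $m$ is a root of $g$ modulo $p^*$ and lies in one of at most $d$ residue classes, each meeting $[1,N]$ in $O(1)$ integers since $p^* \geq N$.  Hence each $p$-th power class captures $O(1)$ elements of $T$, and the image has size $\gg N$.  The main obstacle is the clean lifting of the output of Lemma~\ref{Lsquarefree} from $\ZZ$ up to $\mc{O}_K$: one must check that $\mfq$ genuinely has residue degree one, that it stays unramified through $K/\QQ(a_1)$ for $p^*$ large, and that $\mfP$ avoids the bounded bad set attached to $c$ and the differences $a_1 - a_j$; once this technical step is handled, the two cases combine to establish Weak Diversity for $f$ conditionally on $abc$ for $L$.
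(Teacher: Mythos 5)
Your proof is correct in outline, but it takes a genuinely different route from the paper's. The paper gives a single unified argument: after the same reduction to $y^p=g(x)$, it takes a separable polynomial $h$ with the same roots and leading coefficient as $g$, applies the \emph{number-field version} of Granville's Theorem 1 to produce $\gg N$ values of $n$ for which the ideal $(h(n))I^{-1}\subseteq\mc{O}_K$ is squarefree and the factorizations of the $(h(n))$ are pairwise distinct away from $I$ and $(p)$, and then distinguishes the fields $K(g(n)^{1/p})$ by their ramification loci. This is shorter, but it invokes $abc$ over the (possibly large) field $K=L$. You instead split into cases and never use the number-field squarefree-values theorem: when all branch points lie in $\QQ$ you fall back on Bilu--Luca's unconditional result for abelian covers with rational branch points (handling the twisting constant $c$ by the Kummer-theoretic identification of residue fields with classes in $K^*/K^{*p}$), and when some branch point is irrational you run the Dvornicich--Zannier argument --- the same engine as the paper's Theorem \ref{Plargedegree} --- applying Lemma \ref{Lsquarefree} over $\QQ$ to the minimal polynomial of that branch point and lifting the resulting marker prime to $\mc{O}_K$. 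What your route buys is significant: since Lemma \ref{Lsquarefree} only needs $abc$ over $\QQ$ and Case 1 is unconditional, a careful write-up of your argument proves the formally stronger statement that $abc$ over $\QQ$ alone implies Weak Diversity for all $K$ and $L$, extending the paper's closing Remark beyond covers defined over $\QQ$; the cost is the case analysis, the dependence on Bilu--Luca, and the prime-lifting bookkeeping you flag (all of which does go through: the unique $\mfq$ with $f_\mfq=1$ and $v_\mfq(n-a_1)=1$ exists because $\sum_{\mfq\mid p^*}f_\mfq v_\mfq(n-a_1)=1$ with nonnegative integer terms, and excluding the finitely many primes dividing $\disc(K)$, $c$, and the $N_{K/\QQ}(a_i-a_j)$ handles the rest). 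Two small corrections: the equivalence $K(\sqrt[p]{c\,h(n)})=K(\sqrt[p]{c\,h(m)})\iff h(n)/h(m)\in K^{*p}$ is false as stated --- equality of fields only forces $c\,h(n)\equiv (c\,h(m))^j$ in $K^*/K^{*p}$ for some $j$ prime to $p$ --- but this merely costs a factor of $p-1$ in the count; and in extracting your set $T$ from Lemma \ref{Lsquarefree} you should note that each $m\le N$ can serve at most $\deg m_1$ primes $p^*\ge N$, so the $\gg N$ primes still yield $\gg N$ distinct values of $m$.
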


  \begin{proof}
  
By Lemma \ref{Lbasechange} we may, without loss of generality, assume that $L=K$.
By Proposition \ref{Pcyclic}, we may assume that $f$ is a $\ZZ/p$-cover, for some prime $p$.  After a base change, and using Lemma 
\ref{Lbasechange} again, we may assume that $f$ is given by an equation $y^p = g(x)$, where $g(x) \in \mc{O}_K[x]$ is a polynomial with 
roots exactly at the branch points and all roots of $g(x)$ have order at most $p-1$.  

Let $h(x) \in \mc{O}_K[x]$ be a separable polynomial with the same leading coefficient and roots as $g(x)$.  By the number field version\footnote{See the remark on \cite[p.\ 993]{Granville}} of \cite[Theorem 1]{Granville}, there exists a positive constant $c$ and an ideal $I \subseteq \mc{O}_K$ such that for large enough $N$, the ideal $(h(n))I^{-1} \subseteq \mc{O}_K$ is squarefree for at least $cN$ elements $n \in \{1, \ldots, N\}$.  By Lemma \ref{Lnorm}, after replacing $c$ by a smaller positive constant, we can find $cN$ elements $n \in \{1, \ldots, N\}$ such that $(h(n))I^{-1}$ is squarefree and the ideals $(h(n))$ are pairwise distinct.  After replacing $c$ by yet a smaller constant, we may assume that the prime factorizations of the ideals $(h(n))$ are pairwise distinct even when prime factors of $I$ and of $(p)$ are ignored.  

Now, $h(n) \mid g(n) \mid h(n)^{p-1}$, so the primes ramified in $K(g(n)^{1/p})/K$, other than those dividing $I$ or $(p)$, are exactly those primes dividing $(h(n))$.  Thus the fields $K(g(n)^{1/p})$ are pairwise distinct, which proves Weak Diversity for $f$.
\end{proof}

\begin{rem}
Combining Theorem \ref{Tweakdiversity} with Theorem \ref{Plargedegree}, we see that assuming the $abc$ Conjecture over $\QQ$ suffices to prove Weak Diversity for covers defined over $\QQ$, even if the branch locus does not consist of $\QQ$-points. 
\end{rem}

  \bibliographystyle{alpha}
  \bibliography{main.bib}

  \end{document}